\def\@settitle{\begin{center}%
		\baselineskip14\p@\relax
		\bfseries
		\@title
	\end{center}
}
\let\originalleft\left
\let\originalright\right
\renewcommand{\left}{\mathopen{}\mathclose\bgroup\originalleft}
\renewcommand{\right}{\aftergroup\egroup\originalright}
\theoremstyle{plain}
\newtheorem{thm}{Theorem}[section]
\newtheorem{remk}[thm]{Remark}
\newtheorem{prop}[thm]{Proposition}
\newtheorem{lemma}[thm]{Lemma}
\theoremstyle{definition}
\newtheorem{defn}[thm]{Definition}
\theoremstyle{definition}
\theoremstyle{remark}
\newtheorem*{remk*}{Remark}
\def \F {\mathcal{F}}
\def \G {\mathcal{G}}
\def \H {\mathcal{H}}
\def \kpt {\operatorname{pt}}
\def \O {\mathcal{O}}
\def \L {\mathcal{L}}
\def \C {\mathcal{C}}
\def \M {\mathcal{M}}
\def \mainfield {\mathrm{k}}
\def \K {\mathbb K}
\def \KK {\mathbf K}
\newcommand*{\defeq}{\mathrel{\rlap{%
			\raisebox{0.3ex}{$\m@th\cdot$}}%
		\raisebox{-0.3ex}{$\m@th\cdot$}}%
	=}
\newcommand{\relmiddle}[1]{\mathrel{}\middle#1\mathrel{}}
\newcommand{\RN}[1]{
	\textup{\uppercase\expandafter{\romannumeral#1}}
}
\date{}
\title{Second differentials in the Quillen spectral sequence}
\author{Georgy Belousov}
\email{jorabelousov@gmail.com}
\begin{document}
\begin{abstract}
	For an algebraic variety $X$, we introduce generalized first Chern classes, which are defined for coherent sheaves on $X$ with support in codimension $p$ and take values in $CH^{p+1}(X)$. We use them to provide an explicit formula for the differentials $d_2^p: E_2^{p,-p-1} \to E_2^{p+2, -p-2} \cong CH^{p+2}(X)$ in the Quillen spectral sequence. If the variety $X$ is regular, this gives maps $H^p(X, \mathcal{K}_{p+1}) \to CH^{p+2}(X)$.
\end{abstract}
\subjclass{14C35, 19E15, 19E08}
\keywords{Quillen spectral sequence, Gersten complex, Chow groups}
\maketitle
\begin{section}{Introduction}
Sheaves of $K$-groups naturally arise in the study of algebraic cycles.
The first phenomenon of this sort was Bloch's formula, identifying $p$-th cohomology of the sheaf of $p$-th algebraic $K$-groups on a smooth variety $X$ with the $p$-th Chow group of the variety.
This was proved in the case $p=2$ by Bloch (see \cite{Bloch}), and later verified for all $p$ by Quillen (see \cite{Quillen}.) 

More generally, there is a spectral sequence starting with $K$-cohomology, that is, cohomology of sheaves of $K$-groups on $X$, and converging to $K$-groups of $X$ (see \cite{BrownGersten}).
By Gersten's conjecture, proved by Quillen, for a smooth variety this spectral sequence coincides with the Quillen spectral sequence (we recall the definition in Section \ref{statement}).
Its terms are cohomology of Gersten complexes, and are given by certain groups of equivalence classes of algebraic cycles equipped with elements of $K$-groups of function fields of their components.

Take, for example, the first nontrivial differential on the second sheet of this spectral sequence, namely 
\begin{equation*}
	{d_2: H^1(X, \mathcal{K}_2)\rightarrow H^3(X, \mathcal{K}_3)} \cong CH^3(X).
\end{equation*}
Considering the Gersten complex, one can see that an element of the left hand side is identified with a collection of divisors on $X$, each of which is endowed with a non-zero rational function, subject to a certain cancellation condition on their divisors.
The target group of $d_2$ coincides with $CH^3(X)$ by Bloch's formula.

Consequently, both the source and the target of $d_2$ are expressed in terms of algebraic cycles and rational functions on them.
This raises the question of whether the operation $d_2$ admits a geometric description. Such a description is the topic of this work.

\medskip \noindent With this aim, we introduce the maps $c_1^p$, generalizing the first Chern class.
Let $\F$ be a coherent sheaf on $X$ \emph{with support in codimension} $p$.
In this case we can construct an element $c_1^p(\F)\in CH^{p+1}(X)$ using the Quillen spectral sequence (see Section \ref{Chern}). We also give a more explicit formula for $c_1^p(\F)$ in terms of the first Chern class on smooth varieties (see Proposition \ref{chern-explicit}).
For instance, if $Z$ is a smooth closed subvariety of codimension $p$ in $X$, $\G$ is a coherent sheaf on $Z$, and $j:Z \hookrightarrow X$ denotes the corresponding closed immersion map, then \begin{equation*}c_1^p(j_*(\G)) = j_* (c_1(\G)) \in CH^{p+1}(X),\end{equation*}
where $c_1(\G) \in CH^1(Z)$.

We prove the results for arbitrary equidimensional, not necessarily smooth algebraic varieties. 
Without the smoothness assumption, the terms of the Quillen spectral sequence $E_r^{p,q}$ are no longer given by $K$-cohomology. 
However, the expression of $E_r^{p,q}$ in terms of algebraic cycles remains valid.

Specifically, an element $\alpha\in E_2^{p,-p-1}$ is presented by a finite collection $\{(W_i,\varphi_i)\}_i$ of codimension $p$ cycles $W_i$ on $X$, each of which is endowed with a non-zero rational function $\varphi_i$, such that the sum of the divisors of $\varphi_i$ is zero as a codimension $(p+1)$ cycle on $X$.
In Section \ref{statement}, we define coherent sheaves $\F_i$ and $\G_i$ on $X$ with support on $W_i$; roughly, if the numerator and the denominator of $\varphi_i$ are considered as sections of a line bundle, then $\F_i$ and $\G_i$ are cokernels of these sections.

Our main result (see Theorem \ref{main}) is that the equality 
\begin{equation}\label{intro_formula} 
	d_2^p(\alpha)=\sum_i\big(c_1^{p+1}(\F_i)-c_1^{p+1}(\G_i)\big)
\end{equation} 
holds in $CH^{p+2}(X)$ for $p \geq 1$, where 
\begin{equation*} 
d_2^p: E_2^{p, -p-1} \to E_2^{p+2, -p-2} \cong CH^{p+2}(X)
\end{equation*}
is a differential in the Quillen spectral sequence. 

Let $X$ be a smooth variety over a field of characteristic $0$. In this case we have the comparison between Chow groups and adjoint quotients of the $K_0$-group (see \cite{Fulton}*{Example $15.3.6$}). Since the target of $d_2^p$ is on the last nonvanishing diagonal of the spectral sequence, it follows that the image of $d_2^p$ lies in the $(p+1)!$-torsion of $CH^{p+2}(X)$. We can also see directly that the right-hand side of formula \eqref{intro_formula} is a $(p+1)!$-torsion class. Indeed, as explained in Remark \ref{RR} below, under our assumptions $(p+1)!\:c_1^{p+1}$ equals (up to sign) the usual $(p+2)$-nd Chern class. In other words, $c_1^{p+1}$ provides a natural way to divide by $(p+1)!$ the $(p+2)$-nd Chern class of a sheaf with support in codimension $p+1$. Now by construction $[\F_i]=[\G_i]$ in $K_0(X)$, so the difference of their Chern classes is zero in $CH^{p+2}(X)$.

\medskip \noindent The orders of differentials in the Atiyah--Hirzebruch spectral sequence in topology were computed by V. Buchstaber through the use of the action of cohomological operations \cite{Buchstaber}.
He was also able to give a formula for certain differentials in terms of Steenrod squares.

A similar idea was later used by A. Merkurjev to estimate the orders of the differentials in the Quillen spectral sequence with targets at the $K_0$- and $K_1$-diagonals (see \cite{Merkurjev}).

S. Yagunov carried out a similar program for the motivic spectral sequence in his recent paper \cite{Yagunov}. 
For a prime $l \geq 3$, he proved that the $l$-local part of $d_r$ vanishes for $r < l$ and obtained an expression for the $l$-local part of $d_l$ in terms of stable motivic operations of Voevodsky. 
A crucial new ingredient in his proof is a computation of the motivic Steenrod algebra with $l$-cyclotomic coefficients.
The motivic spectral sequence is closely related to the Quillen spectral sequence considered in the present article. In particular, there is a differential in the motivic spectral sequence whose source and target are isomorphic to those of $d_2^p$ (see \cite{Landsburg}*{Theorem 2.5}). However, the method of Yagunov gives no information about this differential because of certain trivial vanishings occurring when $l=2$.
We also note that our answer is given in explicit geometric terms, in contrast to the expressions for higher differentials found by S. Yagunov.
\end{section}
\begin{section}{Statement of the main result}\label{statement}
	Let $X$ be an integral scheme of finite type over a field $\mainfield$. For convenience, we assume that $X$ is equidimensional.
	
	The abelian category $\M_X$ of coherent sheaves on $X$ is equipped with a finite-length filtration 
	\begin{equation*}\M^p_X=\left\{\mathcal{F}\in\M_X \relmiddle{|}\operatorname{codim}_X(\operatorname{Supp}(\mathcal{F}))\geq p\right\}
	\end{equation*}
   	by Serre subcategories, called the filtration by codimension of support.
	Taking $K$-groups of this filtered abelian category yields an exact couple
	\begin{equation*} K_i(\M_X^{p+1}) \xrightarrow{\partial} K_i(\M_X^{p}) \to 
	\bigoplus_{x \in X^{(p)}} K_i (\mainfield(x)) \to 
	K_{i-1}(\M_X^{p+1}).\end{equation*} 
	The Quillen spectral sequence is, by definition, the fourth quadrant cohomological spectral sequence \[E_1^{p,q}=\bigoplus_{x\in X^{(p)}} K_{-p-q}(\mainfield(x))
	\Rightarrow K_{-p-q}(X)\] associated with this exact couple (see \cite{Quillen}*{Theorem 5.4}.)
	
	In this paper, we compute the differentials 
	\begin{equation*} d_2^p: E_2^{p, -p-1} \to E_2^{p+2, -p-2} \cong CH^{p+2}(X), \quad p \geq 1.
	\end{equation*}
	
	An element $\alpha\in E_2^{p,-p-1}$ is presented by a finite collection $\{(W_i,\varphi_i)\}_i$ of integral codimension $p$ cycles $W_i$ on $X$, each of which is endowed with a non-zero rational function $\varphi_i$, such that the equality $\sum_i \operatorname{div}(\varphi_i) = 0$ holds in the group $Z^{p+1}(X)$ of codimension $(p+1)$ cycles on $X$ (not taken up to rational equivalence).
	
	For each $i$, denote by $j_i$ the closed immersion $W_i \hookrightarrow X$.
	Let $\varphi_i = \frac{f_i}{g_i}$, where
	\begin{equation*}
	f_i, g_i : \O_{W_i} \to \L_i
	\end{equation*}
	are regular sections of a line bundle $\L_i$. Let $D_i, E_i$ be the effective Cartier divisors of zeros of the sections $f_i, g_i$ (see, for example, \cite{div0}*{Definition 2.1.8}.) By construction, we have \[ \operatorname{div}(\varphi_i) = D_i - E_i. \]
	Define the following coherent sheaves on $X$: \begin{equation*} \F_i=j_{i,*}\,(\operatorname{Coker}(f_i)),\quad
	\G_i=j_{i,*}\,(\operatorname{Coker}(g_i)). \end{equation*}
	
	In Section \ref{Chern}, we introduce the maps \begin{equation*}c^{p}_1:K_0(\M^p_X)\to CH^{p+1}(X)\end{equation*} obtained by applying the construction of the first Chern class to the filtered category $\M_X^p$ in place of $\M_X$.
	Our main result is then the following.
	\begin{thm}\label{main} 
		In $CH^{p+2}(X)$ there is an identity 
		\begin{equation*}d_2^p(\alpha)=\sum_i\big(c_1^{p+1}(\F_i)-c_1^{p+1}(\G_i)\big).\end{equation*}
	\end{thm}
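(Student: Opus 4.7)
The approach is to trace both sides of the claimed identity through the Quillen exact couple and match them in $CH^{p+2}(X)$. First, I unwind the definition of $d_2^p(\alpha)$ as a zigzag: choose a lift $\tilde\alpha \in K_1(\M_X^p)$ of $\alpha$ whose image in $E_1^{p,-p-1}$ is $\sum_i(W_i,\varphi_i)$; apply the boundary $\delta \colon K_1(\M_X^p / \M_X^{p+1}) \to K_0(\M_X^{p+1})$ of the localization sequence; observe that the further image of $\delta\tilde\alpha$ in $Z^{p+1}(X) = E_1^{p+1,-p-1}$ equals $\sum_i \operatorname{div}(\varphi_i)$ and hence vanishes by hypothesis on $\alpha$; lift $\delta\tilde\alpha$ to some $\beta \in K_0(\M_X^{p+2})$; and read off $d_2^p(\alpha) \in CH^{p+2}(X) = E_2^{p+2,-p-2}$ as the class of the image of $\beta$ in $E_1^{p+2,-p-2}$.

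Second, Quillen's explicit description of $\delta$ on a class $(W, \varphi)$ with $W$ of codimension $p$ and $\varphi$ a nonzero rational function, after choosing a presentation $\varphi = f/g$ with $f, g$ sections of a common line bundle on $W$, gives $\delta(W,\varphi) = [j_*\operatorname{Coker}(f)] - [j_*\operatorname{Coker}(g)]$ in $K_0(\M_X^{p+1})$. Applied termwise with the data $(\L_i, f_i, g_i)$ furnished in the statement, this yields $\delta\tilde\alpha = \sum_i\bigl([\F_i] - [\G_i]\bigr)$, so the element $\beta$ of the preceding step is precisely a lift of this combination to $K_0(\M_X^{p+2})$.

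Third, the crucial input is a compatibility property of the generalized first Chern class built in Section \ref{Chern}: for any $\xi \in K_0(\M_X^{p+1})$ whose image in $Z^{p+1}(X)$ vanishes, $c_1^{p+1}(\xi) \in CH^{p+2}(X)$ coincides with the image of an arbitrary lift of $\xi$ to $K_0(\M_X^{p+2})$. Granting this, additivity of $c_1^{p+1}$ on $K_0$ combined with the identification $\delta\tilde\alpha = \sum_i([\F_i] - [\G_i])$ and the zigzag description of $d_2^p(\alpha)$ yields
\begin{equation*}
\sum_i\bigl(c_1^{p+1}(\F_i) - c_1^{p+1}(\G_i)\bigr) = c_1^{p+1}(\delta\tilde\alpha) = [\beta] = d_2^p(\alpha)
\end{equation*}
in $CH^{p+2}(X)$, as claimed.

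The main obstacle is establishing the compatibility property used in the last step. Since $c_1^{p+1}$ is constructed to be nontrivial even on classes with nonzero codimension-$(p+1)$ cycle image --- it genuinely generalizes the first Chern class --- the content is that the Chern-class construction of Section \ref{Chern} collapses to the zigzag edge map precisely when this leading cycle datum vanishes. I expect this to follow structurally from the definition and would isolate it as a preliminary lemma before the main argument. The remaining technical points --- the sign conventions in Quillen's boundary formula, independence from the choice of $(f_i, g_i)$, and additivity of $c_1^{p+1}$ --- are standard and handled by direct check.
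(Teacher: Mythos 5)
Your proposal is correct in substance and arrives at the theorem by the same underlying computation as the paper, but packaged at the level of $K$-groups and the exact couple rather than via paths in a rigidified $K$-theory space. Concretely: your boundary formula $\delta\bigl(\sum_i(W_i,\varphi_i)\bigr)=\sum_i\bigl([\F_i]-[\G_i]\bigr)$ is exactly what the paper's paths \eqref{1}, \eqref{2} together with property (\emph{ii}) of the functor $\mathbf{K}$ establish, and the ``compatibility property'' you isolate in the third step is precisely Remark \ref{remarkstar} (equivalently Lemma \ref{endpoint}) applied with $p+1$ in place of $p$. What the paper's space-level model (Section \ref{model}, Proposition \ref{excoup}) buys is a single framework in which the representative of $\alpha$, the lift $\beta$, and the second coboundary are controlled simultaneously, with base points and signs fixed once and for all; in your version the corresponding points --- that the exact-couple zigzag you describe agrees with the definition of $d_2^p$ via the two localization fibrations, and that Quillen's boundary formula holds on the nose for the chosen presentations $\varphi_i=f_i/g_i$ --- are exactly where the technical content lies, so calling them ``standard and handled by direct check'' relocates the work rather than avoiding it. One genuine slip to fix: a lift $\tilde\alpha\in K_1(\M_X^p)$ of $\alpha$ need not exist, and if it did exist then $\delta$ applied to its image in $K_1(\M_X^p/\M_X^{p+1})$ would vanish by exactness of the localization sequence, forcing $d_2^p(\alpha)=0$; what you actually use (and what is correct) is that $\delta$ is applied directly to the representative $\sum_i(W_i,\varphi_i)$ of $\alpha$ in $E_1^{p,-p-1}=K_1(\M_X^p/\M_X^{p+1})$, and the lifting happens one step later, from $K_0(\M_X^{p+1})$ to $K_0(\M_X^{p+2})$.
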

\end{section}
\begin{section}{Generalized first Chern classes}\label{Chern}
	In this section we define the maps $c_0^p$ and $c_1^p$, repeating one of the constructions of the usual zeroth and first Chern class with $\M_X$ replaced by $\M_X^p$.
	In particular, $c_0^0=c_0,\: c_1^0=c_1$, where $c_0$ and $c_1$ are considered on arbitrary, not necessarily smooth varieties.
			
	Consider the first sheet of the Quillen spectral sequence for the category $\M_X^p$ with the filtration by codimension of support (i.e., the stupid truncation of the Quillen spectral sequence for $\M_X$).
			
	\begin{equation*}\includegraphics{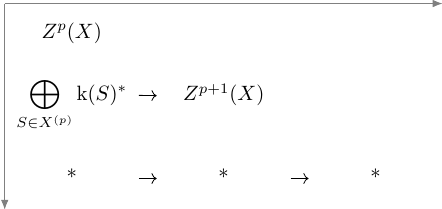}\end{equation*}
	The first two diagonal terms of this spectral sequence stabilize at $E_2$, and we have $E_\infty^{0,0}=Z^p(X)$, $E_\infty^{1,-1}=CH^{p+1}(X)$.
	The spectral sequence then yields a map \[c_0^p: K_0(\M_X^p) \to Z^p(X),\quad [\F] \mapsto \sum_{Z \in X^{(p)}} \operatorname{ln}_{\O_{X, Z}}(\F_Z) \cdot Z \in Z^p(X), \] which sends the class of a sheaf $\mathcal{F}$ to the proper-dimensional part of its support. 
	Here $X^{(p)}$ is the set of irreducible closed subvarieties of codimension $p$ in $X$, and $\operatorname{ln}_{\O_{X, Z}}(\F_Z)$ is the length of the module $\F_Z$ over the local ring $\O_{X,Z}$.
			
	There is also a map \[\sigma: Z^p(X) \to K_0(\M^p_X)\] given by $Z \mapsto [\O_Z]$, where $Z$ is an irreducible closed subvariety of codimension $p$ in $X$. This map provides a section of $c_0^p$, that is: \begin{equation*}c_0^p \circ \sigma = \operatorname{Id}: Z^p(X) \to Z^p(X). \end{equation*}
	Denote by $\operatorname{pr}$ the corresponding projection onto $\operatorname{Ker}(c_0^p)$, namely, \[{\operatorname{pr}: K_0(\M_X^p) \to \operatorname{Ker}(c_0^p)},\quad \operatorname{pr}=\operatorname{Id}-\sigma \circ c_0^p.\]
	From the spectral sequence one gets a homomorphism \[{\tau: \operatorname{Ker}(c_0^p) \to CH^{p+1}(X)}.\] 
	\begin{remk}\label{remarkstar} 
		The group $\operatorname{Ker}(c_0^p)$ coincides with the image of the map \\$\rho: K_0(\M_X^{p+1}) \to K_0(\M_X^p)$. For any $r \in K_0(\M_X^{p+1})$, the equality 
		\begin{equation*} \tau(\rho(r)) = \left[ c_0^{p+1}(r) \right]\end{equation*}
		holds in $CH^{p+1}(X)$.
	\end{remk}
	\begin{defn} We call the composition 
		\begin{equation*} 
			c_1^p = \operatorname{\tau} \circ \operatorname{pr}: K_0(\M_X^p) \to CH^{p+1}(X)
		\end{equation*}
		a \emph{generalized first Chern class}.
	\end{defn}
	For brevity, given a coherent sheaf $\F$, we denote $c_0^p(\left[\F\right])$ and $c_1^p(\left[\F\right])$, where $\left[\F\right]$ is the class of $\F$ in $K_0$, by just $c_0^p(\F)$ and $c_1^p(\F)$, respectively.
			
	Let us give a more explicit formula for $c_1^p$ in terms of the first Chern class on smooth varieties; it is not used in the proof of the main result, but we include it, because it is interesting in its own right.
	
	Suppose $\G$ is a coherent sheaf on $Z$, where $Z$ is an irreducible closed subvariety of $X$ of codimension $p$.
	Denote by $j:Z\hookrightarrow X$ the corresponding closed immersion.
	Consider the normalization map $\pi: \widetilde{Z} \to Z$, with $\widetilde{Z}_{sing}$ and $\widetilde{Z}_{sm}$ denoting the singular and smooth loci of $\widetilde{Z}$.
	The morphism $f\defeq j\circ\pi$ is finite and birational onto its image.
	As $\widetilde{Z}$ is normal, $\operatorname{codim}_{\widetilde{Z}}(\widetilde{Z}_{sing})\geq 2$, so that the groups $CH^i(\widetilde{Z})$ and $CH^i(\widetilde{Z}_{sm})$ are canonically identified for $i=0,1$.
	By abuse of notation, we will write \[f_*: CH^i(\widetilde{Z}_{sm}) \to CH^{p+i}(X)\] for the composition \[CH^i(\widetilde{Z}_{sm}) \xrightarrow{\sim} CH^i(\widetilde{Z}) \to CH^{p+i}(X),\] where $i=0,1$.
		
	Let $\eta: \G \to \pi_*\pi^*\G$ be the canonical morphism, and put \[\varphi = j_*\eta: j_*\G \to f_*\pi^*\G.\]
	\begin{prop}\label{chern-explicit}	In $CH^{p+1}(X)$, there is an identity
		\begin{equation*}c_1^p(j_* \G)=
			f_*\mathop{c_1} \left( \pi^*\G |_{\widetilde{Z}_{sm}} \right) +
			\left[c_0^{p+1}(\operatorname{Ker}{\varphi})\right] -
			\left[c_0^{p+1}(\operatorname{Coker}{\varphi})\right],
		\end{equation*}
		where $c_1\left( \pi^*\G |_{\widetilde{Z}_{sm}} \right) \in CH^1(\widetilde{Z}_{sm})$.
	\end{prop}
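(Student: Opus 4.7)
The plan is to decompose $[j_*\G]$ in $K_0(\M_X^p)$ using the canonical map $\varphi$, handle the kernel and cokernel by the fact that they sit in codimension $\geq p+1$, and reduce the remaining piece to a pushforward compatibility for the Quillen spectral sequence.

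Since $\pi: \widetilde{Z} \to Z$ is finite and birational, the adjunction unit $\eta: \G \to \pi_*\pi^*\G$ is an isomorphism at the generic point of $Z$, so $\operatorname{Ker}\varphi$ and $\operatorname{Coker}\varphi$ have support of codimension $\geq p+1$ in $X$. The two short exact sequences
\begin{equation*}
0 \to \operatorname{Ker}\varphi \to j_*\G \to \operatorname{Im}\varphi \to 0,\qquad 0 \to \operatorname{Im}\varphi \to f_*\pi^*\G \to \operatorname{Coker}\varphi \to 0
\end{equation*}
give, after applying the homomorphism $c_1^p$, the identity
\begin{equation*}
c_1^p(j_*\G)=c_1^p(f_*\pi^*\G)+c_1^p(\operatorname{Ker}\varphi)-c_1^p(\operatorname{Coker}\varphi).
\end{equation*}
For any $\F\in\M_X^{p+1}$ one has $c_0^p(\F)=0$, so $\operatorname{pr}([\F])=[\F]$, and Remark \ref{remarkstar} yields $c_1^p(\F)=[c_0^{p+1}(\F)]$. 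Applying this to $\operatorname{Ker}\varphi$ and $\operatorname{Coker}\varphi$ accounts for the last two terms in the stated formula.

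It therefore suffices to prove the pushforward compatibility
\begin{equation*}
c_1^p(f_*\pi^*\G)=f_*\bigl(c_1(\pi^*\G|_{\widetilde{Z}_{sm}})\bigr)\quad\text{in }CH^{p+1}(X).
\end{equation*}
The morphism $f$ is finite and lands inside a closed subvariety of codimension $p$, hence sends $\M_{\widetilde{Z}}^a$ into $\M_X^{a+p}$; this exactness-preserving functor induces a morphism of the Quillen exact couples, shifted by $(p,-p)$, and therefore a morphism of spectral sequences. Under this morphism the edge maps used to build $c_1=c_1^0$ on $\widetilde{Z}$ go to the edge maps used to build $c_1^p$ on $X$. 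Since $\widetilde{Z}$ is normal, $\operatorname{codim}_{\widetilde{Z}}(\widetilde{Z}_{sing})\geq 2$, so $CH^1(\widetilde{Z})\xrightarrow{\sim}CH^1(\widetilde{Z}_{sm})$, and the identification $c_1(\pi^*\G)=c_1(\pi^*\G|_{\widetilde{Z}_{sm}})$ matches the one used in the statement.

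The main obstacle is the pushforward step: one must verify that $f_*$ commutes with the differentials of the two truncated spectral sequences and with the passage to the $E_\infty^{1,-1}$ quotient, on the nose and with the correct bidegree shift. This is essentially the naturality of Quillen's construction under proper maps between (possibly singular) varieties, but it requires careful bookkeeping to justify that the edge map defining $c_1^0$ on $\widetilde{Z}$ is really sent to the edge map defining $c_1^p$ on $X$ rather than to some related construction.
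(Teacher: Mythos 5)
Your proposal is correct and follows essentially the same route as the paper: decompose $c_1^p(j_*\G)$ via the four-term exact sequence built from $\varphi$, apply Remark \ref{remarkstar} to the kernel and cokernel (which lie in $\M_X^{p+1}$ because $\pi$ is birational), and reduce the remaining term to the functoriality of the Quillen spectral sequence under the exact functors $f_*:\M_{\widetilde Z}^i\to\M_X^{i+p}$ induced by the finite morphism $f$, together with the identification $CH^1(\widetilde Z)\cong CH^1(\widetilde Z_{sm})$ coming from normality. The paper likewise treats the pushforward compatibility $c_1^p(f_*\H)=f_*c_1^0(\H)$ as a direct consequence of this functoriality (plus functoriality under restriction to $\widetilde Z_{sm}$ and the fact that $c_1^0=c_1$ on smooth varieties), so the bookkeeping you flag as the main obstacle is exactly the step the paper asserts without further elaboration.
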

	\begin{proof}
		Consider the exact sequence
			\[
				0 \to 
				\operatorname{Ker}(\varphi) \to 
				j_*\G \xrightarrow{\varphi} f_*\pi^*\G \to \operatorname{Coker}(\varphi) \to 
				0\:.
			\]
				Since $c_1^p$ is a group homomorphism, we have:
				\[
					c_1^p(j_*\G)=
						c_1^p(f_*\pi^*\G)+
						c_1^p(\operatorname{Ker} \varphi)-
						c_1^p(\operatorname{Coker} \varphi). 
				\]
				It follows from Remark \ref{remarkstar} that 
				\[
					c_1^p(\operatorname{Ker} \varphi) =
						\left[c_0^{p+1}( \operatorname{Ker} \varphi )\right], \quad
					c_1^p(\operatorname{Coker}\varphi) =
						\left[c_0^{p+1}( \operatorname{Coker} \varphi )\right].
				\]
				
				As $f$ is a finite morphism, it induces exact functors $f_*: \M_{\widetilde{Z}}^i \to \M_X^{i+p}$ for $i \geq 0$.
				Hence, the Quillen spectral sequence is functorial with respect to $f_*$, and $c_1^p(f_*\H)=f_*c_1^0(\H)$ for any $\H \in \M_{\widetilde{Z}}$.
				Similarly, the Quillen spectral sequence is functorial with respect to the restriction from $\widetilde{Z}$ to $\widetilde{Z}_{sm}$, so $c_1^0(\H)|_{\widetilde{Z}_{sm}} = c_1^0\left(\H|_{\widetilde{Z}_{sm}}\right)$.
				Trivially, $c_1^0$ coincides with $c_1$ on smooth varieties. 
				Altogether, we have obtained the equality 
				\[
					c_1^p(f_*\H)=f_*c_1\left(\H|_{\widetilde{Z}_{sm}}\right);
				\]
				apply this to $\H=\pi^*\G$ to conclude.
		\end{proof}
		The Riemann--Roch theorem without denominators allows us to compare the map $c_1^p$ with the usual Chern classes.
		Let $\iota^p:\M_X^p \hookrightarrow \M_X$ be the standard embedding, and let $\iota^p_*: K_0(\M_X^p)\to K_0(\M_X)$ be the induced map.
		\begin{remk}\label{RR} Suppose $\operatorname{char} (\mainfield) = 0$ and $X$ is smooth.
		Then there is an identity 
		\[
			c_{p+1}\circ \iota^p_*={(-1)}^p p!\: c_1^p
		\]
		between maps from $K_0(\M_X^p)$ to $CH^{p+1}(X)$.
		
		The proof boils down to considering the following two cases:\\
		$1)$ Proving the identity for arguments in $K_0(\M_X^{p+1}) \subset K_0(\M_X^p)$.\\
		$2)$ Proving the identity for arguments of the form $\mathcal{F}=f_*\mathcal{G}$, where $\mathcal{G}$ is a coherent sheaf on a smooth variety $Z$ and $f: Z\to X$ is a finite morphism birational onto its image.
		
		In the first case, the statement follows from Remark \ref{remarkstar} and the Riemann--Roch theorem without denominators (see \cite{Fulton}*{Example $15.3.6$} for the specific statement needed).
		
		To deal with the second case one can apply the Riemann--Roch theorem without denominators for projective morphisms (see \cite{Pappas}*{Theorem$\: 2.2$}) to the morphism $f$.
		\end{remk}
\end{section}
\begin{section}{Convenient model for algebraic $K$-theory space}\label{model}
	Consider the category $\mathcal{E}$ of essentially small exact categories.
	There is a functor 
	\[
		{\mathbb{K}: \mathcal{E} \to \operatorname{Ho}(CW)}
	\]
	that assigns to an exact category $\mathcal{C}$ the homotopy type of its $K$-theory space (by which we understand the loop space of the space $BQ\C$ defined in \cite{Quillen}).
	We have $K_i(\mathcal{C})=\pi_i(\mathbb{K}(\mathcal{C}))$. 
	In particular, for any object $M$ of $\mathcal{C}$ there is a connected component \[ 
		[M] \in \pi_0(\mathbb{K}(\mathcal{C}))=K_0(\mathcal{C})
	\]
	corresponding to it, such that $[M\oplus N]=[M]+[N]$.
	To a pair of an object $A$ of $\mathcal{C}$ and an automorphism $\lambda: A \to A$, one can assign a canonical homotopy class of loops 
	\[
		[\gamma_\lambda]\in\pi_1(\mathbb{K}(\mathcal{C}))=K_1(\mathcal{C})
	\]
	in a way consistent with composition of automorphisms. 
	Notice that $\pi_0(\mathbb{K}(\mathcal{C}))$ and $\pi_1(\mathbb{K}(\mathcal{C}))$ are abelian groups, and so the homotopy groups are well-defined up to a canonical isomorphism, independently of the choice of a base point.
	We also note that both correspondences are natural with respect to exact functors $\mathcal{C}\to\mathcal{C}'$. 
	
	In what follows, we use a lift of the functor $\mathbb{K}$ to a functor 
	\[ 
		\mathbf{K}: \mathcal{E} \to CW 
	\]
	assigning to an exact category $\mathcal{C}$ an $H$-space $\mathbf{K}(\mathcal{C})$ equipped with the following data:
	
	$1)$ for any object $M$ of $\mathcal{C}$, the choice of a point $\kpt(M)\in \mathbf{K}(\mathcal{C})$ which belongs to the component $[M] \in \pi_0(\mathbb{K}(\mathcal{C}))$;
	
	$2)$ for any short exact sequence
	$E =
	\left[\: 0
	\rightarrow A
	\rightarrow	B
	\rightarrow	C
	\rightarrow 0 \: \right]$
	in $\mathcal{C}$, the choice of a path ${\gamma_E : \kpt(B) \dashrightarrow \kpt({A\oplus C})}$. Here and further we use a dashed arrow to keep track of the source and target point of a path.
	
	We require this data to satisfy the following conditions:
	
	{\it i}) for all $A, B$ in $\mathcal{C}$, there is an equality $\kpt(A)+\kpt(B)=\kpt({A\oplus B})$ as points of $\mathbf{K}(\mathcal{C})$;
	
	{\it ii}) for any automorphism $\lambda: A \to A$ in $\mathcal{C}$, the class $[\gamma_\lambda]$ equals $[\gamma_E]$, where ${
	E =
	[\: 0
	\rightarrow A
	\xrightarrow{\lambda}	A
	\rightarrow	0
	\rightarrow 0 \: ]
	}$;
	
	{\it iii}) for an exact functor $F:\mathcal{C} \to \mathcal{C}'$, we ask that $\kpt({F(A)})=F_*\kpt(A)$, and that $\gamma_{F(E)}=F_*\gamma_E$.
	
	An example of such construction is given by the simplicial set of Gillet--Grayson \cite{GilletGrayson}*{Theorem 3.1}.
	\end{section}
\begin{section} {Proof of the main result}	
	First, we find an expression for the differential
	\[
		d_2^p : E_2^{p,-p-1} \to E_2^{p+2,-p-2}
	\]
	in terms of a certain boundary map. The relevant part of the Quillen spectral sequence has the following form:
	\[\includegraphics{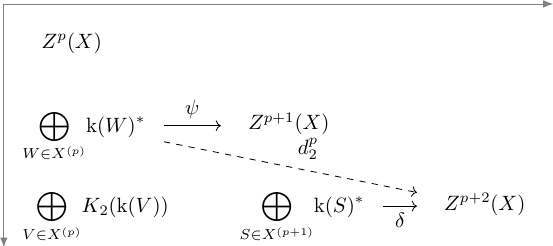}\]
	
	Note that the complement to the left column is precisely the spectral sequence sheet from Section \ref{Chern} with $p$ replaced by $p+1$. As explained therein, that truncated spectral sequence induces the maps
	\[
	c_0^{p+1}:K_0(\M_X^{p+1})\to Z^{p+1}(X),
	\]
	\[
	\tau:{\operatorname{Ker}}(c_0^{p+1})\to CH^{p+2}(X).
	\]
	In the long homotopy exact sequence associated with the homotopy fibration
	\[
	\K(\M_X^{p+1})\to \K(\M_X^{p})\to \K(\M_X^p/\M_X^{p+1}),
	\]
	we have a boundary map
	\[
	\partial: K_1(\M_X^p/\M_X^{p+1})\cong \displaystyle{\bigoplus_{W\in X^{(p)}}}\mainfield(W)^* \to K_0(\M_X^{p+1}).
	\]
	Note that $\psi=c_0^{p+1}\circ \partial$.
	
	Consider an element $\alpha\in {\operatorname{Ker}}(\psi)$. Then $\partial(\alpha)\in {\operatorname{Ker}}(c_0^{p+1})$. It follows from the general description of the differential in the spectral sequence of an exact couple that there is an equality
	\[
	d_2^p(\alpha)=\tau(\partial(\alpha))
	\]
	in ${\operatorname{Coker}}(\delta)\cong CH^{p+2}(X)$.
	In the notation introduced in Section \ref{statement}, this amounts to the following statement.
	
	\begin{lemma}\label{lem1}
		Let $\alpha=\{(W_i,\varphi_i)\}\in {\operatorname{Ker}}(\psi)$. Then $d_2^p(\alpha)=\sum_i c_1^p(\operatorname{\partial}(W_i,\varphi_i))$.
	\end{lemma}
	
	Now let us evaluate the boundary map $\partial$. Let $W$ be an irreducible codimension $p$ cycle on $X$, and let $\varphi$ be a non-zero rational function on $W$. Specializing to the choice $(W_i, \varphi_i)=(W,\varphi)$, we will write $j:W\hookrightarrow X$, $\L$, $\F$, $\G$, $f$, $g$ for the objects $j_i:W_i\hookrightarrow X$, $\L_i$, $\F_i$, $\G_i$, $f_i$, $g_i$ defined in Section \ref{statement}.
	
	\begin{lemma}\label{lem2}
		There is an equality $\operatorname{\partial}(W,\varphi)=[\F]-[\G]$ in $K_0(\M_X^{p+1})$.
	\end{lemma}
	\begin{proof}
		By the properties of the functor $\KK$ specified in Section \ref{model}, the exact sequences
		\[
			0\to j_*\O_W\xrightarrow{f} j_*\L\to \F\to 0,
		\]
		\[
			0\to j_*\O_W\xrightarrow{g} j_*\L\to \G\to 0
		\]
		define corresponding paths in $\KK(\M^p_X)$:
		\[
			\gamma_1:{\kpt}(j_*\L)\dasharrow {\kpt}(j_*\O_W)+{\kpt}(\F),
		\]
		\[
			\gamma_2:{\kpt}(j_*\L)\dasharrow {\kpt}(j_*\O_W)+{\kpt}(\G).
		\]
		Consider the composition $\gamma_1\circ\gamma_2^{-1}$. Since $\KK(\M_X^p)$ is an $H$-space, there is a path in $\KK(\M_X^p)$
		\[
			\gamma:{\kpt}(\G)\dasharrow {\kpt}(\F)
		\]
		such that its shift ${\kpt}(j_*\O_W)+\gamma$ is homotopy equivalent to $\gamma_1\circ\gamma_2^{-1}$. The path $\gamma$ defines a loop $\ell$ in $\KK(\M_X^p/\M_X^{p+1})$, which is homotopy equivalent to the loop in $\KK(\M_X^p/\M_X^{p+1})$ defined as the image of $\gamma_1\circ\gamma_2^{-1}$.
		
		Furthermore, by the property $(ii)$ of the functor $\KK$ (see Section \ref{model}), the paths $\gamma_1$ and $\gamma_2$ represent the elements
		\[
			f,g\in \mainfield(W)^*\subset K_1(\M_X^p/\M_X^{p+1}).
		\]
		This implies the equality $[\ell]=(W,\varphi)$ in $K_1(\M_X^p/\M_X^{p+1})$. Altogether, this proves the lemma.
	\end{proof}
	\noindent Now Theorem \ref{main} follows directly from Lemma \ref{lem1} and Lemma \ref{lem2}.
	
	\subsection*{Acknowledgments}
	I would like to thank my advisor Sergey Gorchinskiy for his constant support and guidance throughout this work. 
	I am grateful to Alexander Kuznetsov and Serge Yagunov for their careful reading of this text and useful discussions, and to the anonymous reviewer for valuable comments.
	Serge Yagunov also kindly explained to me his results on differentials in the motivic spectral sequence.
	
\end{section}
\begin{bibdiv}
\begin{biblist}
	\bib*{LNM341}{book}{
		booktitle={In Algebraic K-Theory I, Lecture Notes in Math., vol. 341},
		publisher={Springer--Verlag Berlin Heidelberg},
		date={1973}
	}
	\bib{Bloch}{article}{
		title={$K_2$ and algebraic cycles},
		author={Bloch, S.},
		journal={Ann. of Math.},
		volume={99 (2)},
		pages={349--379},
		date={1974}
	}
	\bib{BrownGersten}{collection.article}{
		author={Brown, K.},
		author={Gersten, S.},
		title={Algebraic K-theory as generalized sheaf cohomology},
		xref={LNM341},
		pages={266--292},
		year={1973}
	}
	\bib{Buchstaber}{article}{
		title={Modules of differentials of the Atiyah--Hirzebruch spectral sequence},
		author={Buchstaber, V. M.},
		date={1969},
		language={in Russian},
		journal={Mat. Sb. (N.S.)},
		volume={78 (120)},
		number={2},
		pages={307--320}
	}
	\bib{Fulton}{book}{
		author={Fulton, W.},
		title={Intersection theory, 2nd edition},
		publisher={Springer--Verlag New York	},
		date={1998}
	}
	\bib{GilletGrayson}{article}{
		author={Gillet, H.},
		author={Grayson, D.},
		title={The loop space of the Q-construction},
		journal={Ill. J. Math.},
		volume={11},
		date={1987},
		pages={574--597}
	}
	\bib{div0}{book}{
		author={Greuel, G.-M.},
		author={Lossen, C.},
		author={Shustin, E. I.},
		title={Singular Algebraic Curves},
		subtitle={With an Appendix by Oleg Viro},
		publisher={Springer International Publishing},
		year={2018}
	}
	\bib{Landsburg}{article}{
		author={Landsburg, S. E.},
		title={Relative Chow groups},
		journal={Illinois J. Math.},
		volume={35 (4)},
		pages={618-641},
		date={1991}
	}
	\bib*{DM18}{book}{
		title={In Quadratic forms, linear algebraic groups, and cohomology},
		publisher={Developments in Mathematics vol. 18, Springer New York}
	}
	\bib{Merkurjev}{article}{
		author={Merkurjev, A.},
		title={Adams operations and the Brown--Gersten--Quillen spectral sequence},
		xref={DM18},
		pages={305--313},
		date={2010}
	}
	\bib{Pappas}{article}{
		author={Pappas, G.},
		title={Integral Grothendieck--Riemann--Roch theorem},
		journal={Invent. math.},
		volume={170 (3)},
		date={2007},
		pages={455--481}
	}
	\bib{Quillen}{collection.article}{
		author={Quillen, D.},
		title={Higher algebraic K-theory: I},
		xref={LNM341},
		pages={85--147},
		year={1973}
	}
	\bib{Yagunov}{article}{
		title={Motivic cohomology spectral sequence and Steenrod operations},
		author={Yagunov, S.},
		date={2016},
		journal={Compositio Math.},
		volume={152},
		pages={2113--2133}
	}
\end{biblist}
\end{bibdiv}

\end{document}